\documentclass[12pt,a4paper]{article}
\usepackage[utf8]{inputenc}
\usepackage{amsmath}
\usepackage{amsfonts}
\usepackage{amssymb}
\usepackage{enumerate}
\usepackage{amsthm}
\usepackage{graphicx}
\usepackage{setspace}

\author{Thomas Perrett\\
Department of Applied Mathematics and Computer Science,\\
Technical University of Denmark,\\ DK-2800 Lyngby, Denmark\\
\texttt{tper@dtu.dk}}
\title{A zero-free interval for chromatic polynomials of graphs with $3$-leaf spanning trees}

\newtheorem{theorem}{Theorem}[section]

\newtheorem{lemma}{Lemma}[section]

\newtheorem{proposition}{Proposition}[section]
\newtheorem{conjecture}{Conjecture}[section]

\newtheorem{definition}{Definition}[section]

\begin{document}
\maketitle
\begin{abstract}
It is proved that if $G$ is a graph containing a spanning tree with at most three leaves, then the chromatic polynomial of $G$ has no roots in the interval $(1,t_1]$, where $t_1 \approx 1.2904$ is the smallest real root of the polynomial $(t-2)^6 +4(t-1)^2(t-2)^3 -(t-1)^4$. We also construct a family of graphs containing such spanning trees with chromatic roots converging to $t_1$ from above. We employ the Whitney $2$-switch operation to manage the analysis of an infinite class of chromatic polynomials.
\end{abstract}
\begin{section}{Introduction}

The \emph{chromatic polynomial} $P(G,t)$ of a graph $G$ is a polynomial with integer coefficients which counts for each non-negative integer $t$, the number of $t$-colourings of $G$. It was introduced by Birkhoff~\cite{chrompolyIntroduced} in 1912 for planar graphs, and extended to all graphs by Whitney~\cite{Whitney2,Whitney1} in 1932. If $t$ is a real number then we say $t$ is a \emph{chromatic root} of $G$ if $P(G,t)= 0$. Thus the numbers $0, 1, 2, \dots, \chi (G)-1$ are always chromatic roots of $G$ and, in fact, the only rational ones. On the other hand it is easy to see that $P(G,t)$ is never zero for $t\in (-\infty,0)$ and Tutte~\cite{Tutte} showed that the same is true for the interval $(0,1)$. In 1993 Jackson~\cite{32/27Jackson} proved the surprising result that the interval $(1, 32/27]$ is also zero-free, and found a sequence of graphs whose chromatic roots converge to $32/27$ from above. Thomassen~\cite{RootsDenseThomassen} strengthened this by showing that the set of chromatic roots consists of $0$, $1$, and a dense subset of the interval $(32/27, \infty)$.

Let $Q(G,t) = (-1)^{|V(G)|}P(G,t)$, and $b(G)$ be the number of blocks of $G$. We say that $G$ is \emph{separable} if $b(G) \geq 2$ and \emph{non-separable} otherwise. Note that $K_2$ is non-separable. In~\cite{HamPathThomassen}, Thomassen provided a new link between Hamiltonian paths and colourings by proving that the zero-free interval of Jackson can be extended when $G$ has a Hamiltonian path. More precisely he proved the following.

\begin{theorem}\emph{\cite{HamPathThomassen}}\label{thm:HamPathThomassen}
If $G$ is a non-separable graph with a Hamiltonian path, then $Q(G,t)>0$ for $t \in (1,t_0]$, where $t_0 \approx 1.295$ is the unique real root of the polynomial $(t-2)^3+4(t-1)^2$. Furthermore for all $\varepsilon >0$ there exists a non-separable graph with a Hamiltonian path whose chromatic polynomial has a root in the interval $(t_0, t_0+\varepsilon)$.
\end{theorem}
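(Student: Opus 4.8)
The statement has two parts — the zero-free bound $Q(G,t)>0$ on $(1,t_0]$ and a construction witnessing its sharpness — which I would prove separately. For the zero-free bound the basic tool is the additive form of deletion--contraction: since $P(G,t)=P(G-e,t)-P(G/e,t)$ and $Q=(-1)^{|V|}P$, one gets $Q(G,t)=Q(G-e,t)+Q(G/e,t)$, so $Q(G,t)$ is positive as soon as both summands are. The difficulty is that ``non-separable with a Hamiltonian path'' is closed under neither operation: $G-e$ may acquire cut vertices, and the block--product formula — for connected $H$ with blocks $B_1,\dots,B_b$, $Q(H,t)=(-t)^{1-b}\prod_i Q(B_i,t)$ — shows the sign of $Q(H,t)$ on $(1,2)$ is $(-1)^{b-1}$ when every block has positive $Q$; this is \emph{negative} when $b=2$, so a naive induction has the wrong sign. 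My plan is therefore a quantitative two-track induction on $|E(G)|$: (a)~for non-separable $G$ with a Hamiltonian path, prove $Q(G,t)\ge g(t)$ for a carefully chosen positive $g$ on $(1,t_0]$; and (b)~for the ``block-paths'' that arise from deleting one edge of a non-separable graph — one checks that deleting $e=xy$ yields either another non-separable graph or one whose block tree is a path with the two end-blocks containing $x$ and $y$ — prove a companion estimate, carrying the forced sign $(-1)^{b-1}$, strong enough that the wrong-sign summand in $Q(G,t)=Q(G-e,t)+Q(G/e,t)$ is always beaten by the right-sign summand. The polynomial $(t-2)^3+4(t-1)^2$ should fall out as the equality case once $g$ is optimised so the induction just barely closes.

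The inductive step then rests on a structural lemma: every non-separable graph $G$ with a Hamiltonian path $v_1\cdots v_n$, outside a short explicit list, admits a reduction edge $e$ for which both $G/e$ and $G-e$ stay within the scope of (a)--(b) — typically a path-edge $e=v_iv_{i+1}$, so $G/e$ retains a shorter Hamiltonian path automatically and, after checking for common degree-two neighbours, stays non-separable, while $G-e$ is governed by (b) (or again by (a) when $e$ is redundant); one then adds the two bounds. The base cases are the finitely many small non-separable graphs with a Hamiltonian path — short cycles, for which $Q(C_n,t)=(1-t)^n+(t-1)>0$ on $(1,2)$, together with $K_4$, $K_{2,3}$, small fans, and so on — checked by direct computation. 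Contraction may create parallel edges, which are discarded without affecting $P$; and chromatic-polynomial-preserving Whitney $2$-switches can be used to collapse an a priori infinite list of configurations into finitely many equivalence classes.

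For the sharpness part I would exhibit an explicit family $H_1,H_2,\dots$, each non-separable with a Hamiltonian path, obtained by concatenating $k$ copies of a fixed small gadget along the Hamiltonian path. Encoding $P(H_k,t)$ via a transfer matrix, $P(H_k,t)=u(t)^{\top}M(t)^k w(t)$, where $M(t)$ acts on a low-dimensional space of partial colourings of the gadget's attachment vertices, one then compares the eigenvalue-contributions of $M(t)$ as $t$ crosses $t_0$. At $t=t_0$ two competing contributions are in exact balance — this balance being precisely the equation $(t-2)^3+4(t-1)^2=0$ — so for $t$ slightly above $t_0$ and $k$ large, $P(H_k,t)$ takes the sign opposite to the one it carries just below $t_0$, forcing a chromatic root in $(t_0,t_0+\varepsilon)$ for $k$ large enough.

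I expect the main obstacle to be the bookkeeping behind (a)--(b): pinning down the normalising function $g(t)$ and the companion estimate for block-paths so that the induction genuinely closes \emph{and} yields the optimal constant. A slightly too weak $g$ breaks the step, while only the exactly right choice forces $t_0$ to be the root of $(t-2)^3+4(t-1)^2$ rather than something weaker. Establishing the structural lemma, together with dispatching its exceptional small graphs by hand, is the remaining and more routine source of work.
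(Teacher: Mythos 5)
This theorem is not proved in the paper: it is quoted from Thomassen~\cite{HamPathThomassen}, and Section~\ref{subsec:Hampaths} only records the shape of that proof --- a smallest counterexample is shown (by a structural, minimal-counterexample argument, essentially the property-$\Delta$ reduction) to be isomorphic to some $H_k$, and $t_0$ is then read off as the infimum of the nontrivial chromatic roots of the explicit family $\mathcal{H}$, whose polynomials satisfy a second-order linear recurrence with $P(H_k,t)=A\alpha^k+B\beta^k$. Your sharpness argument is essentially this mechanism in transfer-matrix language: the ``exact balance'' at $t_0$ is the vanishing of $\delta^2=(t-2)^4+4(t-1)^2(t-2)=(t-2)\bigl((t-2)^3+4(t-1)^2\bigr)$, i.e.\ the point where $\alpha$ and $\beta$ become complex conjugates, producing sign oscillation of $P(H_k,t)$ and hence roots just above $t_0$. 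That part of your plan is sound.

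The zero-free half, however, has a genuine gap: track~(a) asks for a uniform positive lower bound $Q(G,t)\ge g(t)>0$ over all non-separable graphs with a Hamiltonian path, and no such $g$ exists. For every fixed $t\in(1,t_0]$ one has $0<\beta\le\alpha<1$ (at $t_0$ itself $\alpha=\beta=(t_0-2)^2/2\approx 0.25$), so $Q(H_k,t)=-(A\alpha^k+B\beta^k)\to 0$ as $k\to\infty$; the infimum of $Q(G,t)$ over the class is $0$, attained only in the limit along the extremal family. So the two-track induction as formulated cannot ``just barely close'' at a positive constant. To repair it you would need a size-dependent bound of the form $Q(G,t)\ge c(t)^{|V(G)|}h(t)$, and choosing $c(t)$ optimally is tantamount to identifying the dominant characteristic root $\alpha$ --- which in turn requires knowing that the extremal graphs are exactly the $H_k$. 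That identification (the structural reduction you defer to a ``routine'' lemma) is in fact the heart of Thomassen's proof, carried out via the property-$\Delta$ analysis and Whitney $2$-switches rather than via an optimised scalar estimate. Your observation that $G-e$ has a path-like block structure is correct and is used in such arguments, but by itself it does not substitute for the reduction to the explicit family.
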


If $G$ is separable and has a Hamiltonian path then it is easily seen using Theorem~\ref{thm:HamPathThomassen} and Proposition~\ref{prop:factorcomplete} that $Q(G,t)$ is non-zero in the interval $(1,t_0]$ with sign $(-1)^{b(G)-1}$.

For a graph $G$, a \emph{$k$-leaf spanning tree} is a spanning tree of $G$ with at most $k$ leaves (vertices of degree 1). We denote the class of non-separable graphs which admit a $k$-leaf spanning tree by $\mathcal{G}_k$. Thus Theorem~\ref{thm:HamPathThomassen} gives a zero-free interval for the class $\mathcal{G}_2$. In this article we prove the following analogous result for the class $\mathcal{G}_3$.

\begin{theorem}\label{thm:mainThm}
Let $G$ be a non-separable graph with a $3$-leaf spanning tree, then $Q(G,t)>0$ for $t \in (1,t_1]$, where $t_1 \approx 1.2904$ is the smallest real root of the polynomial $(t-2)^6 +4(t-1)^2(t-2)^3 -(t-1)^4$. Furthermore for all $\varepsilon >0$ there exists a non-separable graph with a $3$-leaf spanning tree whose chromatic polynomial has a root in the interval $(t_1, t_1+\varepsilon)$.
\end{theorem}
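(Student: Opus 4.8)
The plan is to adapt the method behind Theorem~\ref{thm:HamPathThomassen} to handle the extra leaf. If a $3$-leaf spanning tree of $G$ is a Hamiltonian path we are done by Theorem~\ref{thm:HamPathThomassen} (and the block remark following it), so we may assume it is a subdivision of $K_{1,3}$: then $G$ has a spanning subgraph $S$ consisting of three internally disjoint paths $L_1,L_2,L_3$ from a common branch vertex $z$ to the three leaves. The tools I would set up first are deletion--contraction; the series-reduction identity $Q(G,t)=(2-t)\,Q(G-v,t)+Q((G-v)/ab,t)$ for a degree-$2$ vertex $v$ with neighbours $a\neq b$ (with the last term read as $0$ when $a\sim b$), which suppresses long induced paths and, on iteration, produces powers of $(2-t)$; the block factorisation of $Q$ (Proposition~\ref{prop:factorcomplete}), needed to control the signs of the separable graphs that arise under deletion and contraction; and, crucially, the invariance of $P$, hence of $Q$, under Whitney $2$-switches. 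It is worth recording at the outset that $t_1$ is the root in $(1,2)$ of $(t-2)^{3}+(2+\sqrt{5})(t-1)^{2}=0$ --- Thomassen's equation $(t-2)^{3}+4(t-1)^{2}=0$ with $4$ replaced by $2+\sqrt{5}$ --- and that $2+\sqrt{5}$ satisfies $x^{2}=4x+1$, i.e.\ is the positive fixed point of $x\mapsto 4+1/x$; the appearance of a fixed point signals that an iteration drives the analysis. (Incidentally $2+\sqrt{5}=\varphi^{3}$ for the golden ratio $\varphi$, continuing the familiar role of $\varphi$ among chromatic roots.)

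For the zero-free interval I would argue by contradiction, choosing a counterexample $G$ minimising $|V(G)|$ and then $|E(G)|$, with $Q(G,t^{*})\le 0$ for some $t^{*}\in(1,t_1]$. Minimality and non-separability force $\delta(G)\ge 2$ and restrict the placement of consecutive degree-$2$ vertices; the Whitney $2$-switch then lets every $2$-separation of $G$ be re-plumbed so that its parts hang off the spanning spider $S$ in a controlled ``linear'' fashion. This is the manoeuvre promised in the abstract: it collapses the a priori infinite family of non-spider chord patterns to finitely many normal forms. On a normal-form graph I would process $S$ one leg at a time, building subnetworks $H$ and keeping the inductive invariant that $Q(H,t^{*})>0$ and that a suitable slack parameter $\rho(H)=Q(H,t^{*})/Q(H^{-},t^{*})$ --- with $H^{-}$ the previous subnetwork, or $H$ with its two current terminals identified, depending on the bookkeeping chosen --- lies in a fixed interval. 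Adding a vertex, a chord, or reversing a series reduction transforms $\rho$ by an explicit fractional-linear map, and the interval one needs is exactly the maximal one left invariant by these maps. The binding case is the branch vertex $z$, where the two side legs $L_2,L_3$ are folded onto the spine $L_1$: here $\rho$ is acted on by a fractional-linear map whose relevant fixed point, via the substitution $x=(2-t)^{3}/(t-1)^{2}$, equals $2+\sqrt{5}$ precisely when $t=t_1$. Thus the invariant survives for every $t^{*}\in(1,t_1]$, so $Q(G,t^{*})>0$, a contradiction.

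The step I expect to be the main obstacle is the normal-form reduction together with the finitely many base configurations it leaves: one must show that Whitney $2$-switches genuinely reduce every arrangement of non-spider chords to the tractable list, and then verify the slack bound on each base case by an explicit, if lengthy, computation. A further delicate point is that $\rho(H)$ is a ratio of chromatic polynomials, so throughout one must certify that the denominator $Q(H^{-},t^{*})$ is nonzero on $(1,t_1]$; since $H^{-}$ carries a Hamiltonian path (it is essentially a sub-spider with one leg contracted away), this is itself an application of Theorem~\ref{thm:HamPathThomassen}, so the argument is in effect an induction on the number of legs with the $2$-leaf result as its base.

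For the tightness statement I would exhibit an explicit family $H_1\subset H_2\subset\cdots$ of non-separable graphs, each carrying a $3$-leaf spanning tree, built by gluing $k$ copies of a fixed small gadget in series along two terminals onto a fixed base graph, with the third leaf and the $2$-connectivity engineered into the gadget itself. A transfer-matrix computation then gives $Q(H_k,t)=c_{+}(t)\,\lambda_{+}(t)^{k}+c_{-}(t)\,\lambda_{-}(t)^{k}$, where $\lambda_{\pm}(t)$ are the eigenvalues of the $2\times 2$ matrix attached to the gadget; equivalently the normalised slack of $H_k$ is the $k$-fold iterate of the fractional-linear map from paragraph two, started from a value determined by the base. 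Since that map contracts towards its fixed point $2+\sqrt{5}$ on the relevant range, the smallest root $t_k$ of $Q(H_k,t)$ in $(1,2)$ converges to $t_1$; because the zero-free interval already proved forbids any $t_k\le t_1$, for $k$ large we obtain $t_k\in(t_1,t_1+\varepsilon)$, which is the ``furthermore'' claim. Designing the gadget so that it simultaneously realises this map, keeps $H_k$ $2$-connected, and preserves a $3$-leaf spanning tree is the main thing to pin down here.
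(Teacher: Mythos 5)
Your plan correctly identifies the main ingredients --- minimal counterexample, deletion--contraction, Whitney $2$-switches to normalise $2$-separations, and a transfer-matrix/recurrence analysis of an extremal family --- and your reformulation of $t_1$ is right: dividing $(t-2)^6+4(t-1)^2(t-2)^3-(t-1)^4$ by $(t-1)^4$ and solving the resulting quadratic in $(t-2)^3/(t-1)^2$ shows that $t_1$ is the root in $(1,2)$ of $(t-2)^3+(2+\sqrt{5})(t-1)^2$. Nevertheless, as a proof the proposal has genuine gaps. The structural reduction you defer as ``the main obstacle'' is in fact the bulk of the argument: one must show that a smallest counterexample satisfies a rigid property (in the paper, property $\Delta$: for every $2$-cut $\{x,y\}$ one has $xy\notin E(G)$, exactly three $\{x,y\}$-bridges, all separable), and then that such a graph with a $3$-leaf spanning tree is Whitney-equivalent to a member of one explicit three-parameter family $G_{i,j,k}$ (two vertices joined by three bridges $F(x,y,i)$, $F(x,y,j)$, $F(y,x,k)$). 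Establishing property $\Delta$ needs the splitting-closed machinery and Lemma~\ref{lem:dongKoh} to dispose of $2$-cuts with an even number of bridges or with five or more, and a long case analysis of how the spanning tree meets a hypothetical non-separable bridge; ``re-plumbing so that the parts hang off the spider'' does not by itself rule out any of these configurations, and no list of ``finitely many base configurations'' is actually produced.

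The quantitative core is also mis-modelled. The extremal family has three independent leg lengths; for fixed $t$ the recurrence in $j$ gives $P(G_{i,j,k},t)=C\alpha^{j}+D\beta^{j}$, and the sign of the leading coefficient $C$ is governed by the inequality $\gamma\,|P(H_{i+k+2},t)|>P(H_{i+1},t)P(H_{k},t)$, which must be verified uniformly over all $i,k\in\mathbb{N}_0$ --- a genuinely two-parameter estimate using the closed form $P(H_m,t)=A\alpha^m+B\beta^m$, not a single fractional-linear orbit in one slack variable. The fixed point $2+\sqrt{5}$ only emerges in the limit $i,k\to\infty$ of that comparison, which is exactly where the condition $-A\le\gamma\alpha$, i.e.\ $(t-1)^4\le(t-2)^2\delta^2$, defines $t_1$. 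This same point breaks your tightness construction: gluing $k$ copies of a fixed gadget in series onto a \emph{fixed} base grows only one parameter, and for a fixed base the coefficient $C$ remains negative on a whole interval $(1,t_1+\epsilon]$ with $\epsilon>0$ depending on the base, so the roots of such a chain stay bounded away from $t_1$. To approach $t_1$ one needs a two-stage limit: for a given $t>t_1$ first lengthen the two side legs ($i,k\to\infty$) to force $C>0$, and only then lengthen the middle leg ($j\to\infty$) so that the $C\alpha^{j}$ term dominates and, combined with the already-proved negativity on $(1,t_1]$, yields a root in $(t_1,t)$ by continuity.
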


A natural extension of this work would be to find $\varepsilon_k >0$ so that $(1,32/27 + \varepsilon_k]$ is zero-free for the class $\mathcal{G}_k$, $k\geq 4$. However since the graphs presented by Jackson~\cite{32/27Jackson} are non-separable, it must be that $\varepsilon_k \rightarrow 0$ as $k \rightarrow \infty$. Another possible extension would be to find $\varepsilon_l >0$ so that $(1, 32/27+\varepsilon_l]$ is zero-free for the family of graphs containing a spanning tree $T$ with $\Delta(T) \leq 3$ and at most $l$ vertices of degree $3$. Here the possible implications are much more interesting since it is not clear if $\varepsilon_l \rightarrow 0$ as $l \rightarrow \infty$. Indeed only finitely many of the graphs in~\cite{32/27Jackson} have a spanning tree of maximum degree $3$. Theorem~\ref{thm:HamPathThomassen} and our result solve the cases $l=0$ and $l=1$ respectively, which leads us to conjecture the following.

\begin{conjecture}\label{conj:}
There exists $\varepsilon>0$ such that if $G$ is a non-separable graph with a spanning tree of maximum degree $3$, then $Q(G,t)>0$ for $t \in (1, 32/27 + \varepsilon]$.  
\end{conjecture}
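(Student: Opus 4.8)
The plan is to prove the conjecture by induction on $l$, the number of degree-$3$ vertices of the spanning tree $T$, taking Theorem~\ref{thm:HamPathThomassen} (the case $l=0$) and Theorem~\ref{thm:mainThm} (the case $l=1$, a $3$-leaf spider) as base cases. As usual, Proposition~\ref{prop:factorcomplete} reduces matters to $G$ non-separable, and I may assume $T$ is a subdivision of a cubic tree, so that the degree-$3$ vertices (the \emph{branch} vertices) are joined by internally disjoint paths (the \emph{legs}). The decisive feature to establish is that the zero-free interval does not shrink to Jackson's bound: I must produce a single $\varepsilon>0$, \emph{independent of $l$}, with $Q(G,t)>0$ on $(1,32/27+\varepsilon]$.

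I would set up a rooted recursion: root $T$ at a leaf, and for each vertex $v$ let $G_v$ be the subgraph of $G$ spanned by the descendants of $v$ together with $v$ and all $G$-edges among them. Processing $T$ from the leaves toward the root, I would carry at each step a low-dimensional \emph{state} — for a path segment, the analogue of Thomassen's pair consisting of $Q(G_v,t)$ and a boundary-conditioned variant — and attempt to exhibit an invariant region $\mathcal{R}=\mathcal{R}(t)\subset\mathbb{R}^2$ preserved by the two transfer operations encountered while climbing the tree: the \emph{leg step} (extending along a degree-$2$ segment together with any chords incident to the new vertex) and the \emph{branch step} (merging two child states at a branch vertex). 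Invariance of $\mathcal{R}$ under the leg step is essentially the computation behind Theorem~\ref{thm:HamPathThomassen} and its polynomial $(t-2)^3+4(t-1)^2$; the new ingredient is invariance under the branch step, and the behaviour of the boundary of $\mathcal{R}$ under iterated branching should pin down the threshold.

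The constant $32/27$ enters as Jackson's universal zero-free bound~\cite{32/27Jackson}: iterating the branch step corresponds to building ever-bushier subcubic trees, and in the limit the attainable threshold is driven downward toward Jackson's barrier. The content of the conjecture is that a \emph{bounded} branching degree (here $3$) halts this descent strictly above $32/27$ by a uniform margin — equivalently, that $\mathcal{R}(t)$ remains nonempty and sign-correct for every $t\le 32/27+\varepsilon$ no matter how many branch steps are applied. I would exploit the Whitney $2$-switch, as in the proof of Theorem~\ref{thm:mainThm}, to normalise the chords incident to each leg and branch vertex into finitely many canonical configurations, so that the branch-step map becomes one of finitely many explicit rational maps whose invariant region can be analysed directly.

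The main obstacle is twofold. First, chords of $G$ need not respect the tree decomposition: a non-tree edge may join vertices in different legs or different subtrees, destroying the bottom-up locality on which a finite-state recursion relies; controlling such cross-branch chords — most plausibly through a global potential inequality rather than a purely local transfer, or by peeling them off with sign-controlled deletion--contraction before running the tree recursion — is the crux. Second, even granting locality, one must show the invariant inequality does not degrade as branch steps accumulate, since each merge could in principle amplify the worst ratio; the heart of the argument is therefore a uniform contraction estimate guaranteeing that the margin above $32/27$ survives arbitrarily many merges. Producing that $l$-independent estimate is exactly what separates the conjecture from the solved cases $l=0,1$, and is where I expect the genuine difficulty to lie.
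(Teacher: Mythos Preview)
The statement you address is \emph{Conjecture}~\ref{conj:}, not a theorem: the paper explicitly leaves it open, remarking only that Theorems~\ref{thm:HamPathThomassen} and~\ref{thm:mainThm} settle the cases $l=0$ and $l=1$. There is therefore no paper proof to compare against. For the $l=1$ case the paper does \emph{not} run an inductive transfer scheme of the kind you sketch; it argues by minimal counterexample, showing such a graph must have property~$\Delta$, then invokes Lemma~\ref{lem:charac} to reduce (via Whitney $2$-switches) to the explicit three-parameter family $G_{i,j,k}$, whose chromatic polynomials are analysed by a direct two-term linear recurrence.

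Your proposal is a research outline rather than a proof, and you are candid about this: you name the two decisive obstacles --- non-tree edges joining distinct legs or subtrees, which destroy the bottom-up locality your recursion needs, and the uniform-in-$l$ contraction estimate required to keep the margin above $32/27$ from collapsing --- without resolving either. These are exactly the open points. The paper's method for $l=1$ succeeds because property~$\Delta$ imposes such rigid structure (precisely three bridges at every $2$-cut, all separable) that Whitney switches collapse everything to a finite-parameter family amenable to explicit calculation; for general $l$ no analogous finite reduction is available, and your invariant region $\mathcal{R}(t)$ would have to be stable under an unbounded variety of branch-merge maps whose form depends on the cross-chords you have not yet controlled. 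Until those two issues are settled the proposal remains a plausible strategy, not a proof.
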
 

Barnette~\cite{Barnette} proved that a $3$-connected planar graph has a spanning tree of maximum degree $3$. Thus an affirmative answer to Conjecture~\ref{conj:} would immediately imply a zero-free interval for the class of $3$-connected planar graphs. Such an interval is known~\cite{Nearly3Conn} but thought to be far from best possible. 

\end{section}

\begin{section}{Preliminaries}

All graphs in this article are \emph{simple}, that is they have no loops or multiple edges. If $uv$ is an edge of $G$ then $G/uv$ denotes the graph obtained by deleting $uv$ and then identifying $u$ with $v$. This operation is referred to as the \emph{contraction} of $uv$. If $G$ is connected, $S \subset V(G)$ and $G-S$ is disconnected, then $S$ is called a \emph{cut-set} of $G$. A \emph{$2$-cut} of $G$ is a cut-set $S$ with $|S|=2$. If $S$ is a cut-set of $G$ and $C$ is a component of $G-S$, then we say the graph $G[V(C) \cup S]$ is an \emph{$S$-bridge} of $G$. If $P$ is a path and $x,y \in V(P)$ then $P[x,y]$ denotes the subpath of $P$ from $x$ to $y$.

We make repeated use of two fundamental results in the study of chromatic polynomials.

\begin{proposition}[Deletion-contraction identity]\label{prop:delcont}
Let $G$ be a graph and $uv$ be an edge of $G$. Then $$P(G,t) = P(G-uv,t) - P(G/uv,t).$$
\end{proposition}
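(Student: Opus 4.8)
The plan is to prove the identity first as a statement about counting proper colourings at each non-negative integer, and then to promote it to an identity of polynomials. Recall that, for every non-negative integer $t$, the quantity $P(H,t)$ equals the number of proper $t$-colourings of $H$. Since $P(G,t)$, $P(G-uv,t)$ and $P(G/uv,t)$ are all polynomials in $t$, it suffices to establish
$$P(G-uv,t) = P(G,t) + P(G/uv,t)$$
for every non-negative integer $t$: two polynomials that agree at infinitely many points coincide, and the displayed equation rearranges to the claimed identity.

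So I would fix a non-negative integer $t$ and consider the set of proper $t$-colourings of $G-uv$, partitioning it according to the colours assigned to $u$ and $v$. A colouring of $G-uv$ is proper precisely when every pair of adjacent vertices receives distinct colours; since $u$ and $v$ are non-adjacent in $G-uv$, such a colouring imposes no constraint between them. Hence each proper $t$-colouring of $G-uv$ falls into exactly one of two classes: those in which $c(u)\neq c(v)$, and those in which $c(u)=c(v)$.

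The first step of the count is to observe that the colourings with $c(u)\neq c(v)$ are in bijection with the proper $t$-colourings of $G$. Indeed, restoring the edge $uv$ imposes exactly the single extra requirement $c(u)\neq c(v)$, so the proper colourings of $G$ are precisely those proper colourings of $G-uv$ already satisfying this inequality. The second step is to match the colourings with $c(u)=c(v)$ to the proper $t$-colourings of $G/uv$: a colouring of $G-uv$ in which $u$ and $v$ share a colour descends to a well-defined colouring of the identified vertex of $G/uv$, and this correspondence is a bijection onto the proper colourings of $G/uv$. Combining the two classes yields the displayed additive identity.

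The only point requiring care --- and the place where a hasty argument could slip --- is the simple-graph convention in the definition of $G/uv$. If $u$ and $v$ share a common neighbour $w$, then identifying them creates parallel edges to $w$, which are suppressed to keep $G/uv$ simple. This suppression does not affect the bijection of the previous paragraph, because a proper colouring constrains adjacent vertices to differ irrespective of how many edges join them; deleting a redundant parallel edge leaves the set of proper colourings, and hence the count, unchanged. Once this is verified, the colouring identity holds for every non-negative integer $t$, and the polynomial identity follows at once.
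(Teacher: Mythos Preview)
Your argument is correct and is the standard combinatorial proof of the deletion--contraction identity. Note that the paper does not actually supply a proof of this proposition: it is stated as one of ``two fundamental results in the study of chromatic polynomials'' and taken as known. Your proof therefore fills in what the paper deliberately omits, and does so via the expected route --- partitioning the proper $t$-colourings of $G-uv$ according to whether $u$ and $v$ receive the same colour, then lifting the counting identity to polynomials. Your attention to the suppression of parallel edges under the simple-graph convention is also appropriate and correctly handled.
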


\begin{proposition}[Factoring over complete subgraphs]\label{prop:factorcomplete}
Let $G = G_1 \cup G_2$ be a graph such that $G[V(G_1) \cap V(G_2)]$ is a complete graph on $k$ vertices. Then $$P(G,t) = \frac{P(G_1,t)P(G_2,t)}{P(K_{k},t)}.$$
\end{proposition}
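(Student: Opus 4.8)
The plan is to prove the equivalent polynomial identity $P(G,t)\,P(K_k,t)=P(G_1,t)\,P(G_2,t)$ first for every integer $t\geq k$ by a direct counting argument, and then for all $t$ by the fact that two polynomials agreeing at infinitely many points must coincide.

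Writing $S=V(G_1)\cap V(G_2)$, so that $G[S]=K_k$ by hypothesis, I would first note that, since $G=G_1\cup G_2$, every edge of $G$ lies in $G_1$ or in $G_2$, and hence that for a fixed integer $t\geq k$ a proper $t$-colouring of $G$ is precisely a pair $(c_1,c_2)$, where $c_i$ is a proper $t$-colouring of $G_i$ and $c_1$ and $c_2$ agree on $S$: restricting a proper colouring of $G$ to $G_1$ and to $G_2$ produces such a pair, and conversely any such pair glues to a well-defined colouring of $G$ that is proper because every edge of $G$ lies wholly within $G_1$ or wholly within $G_2$.

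The crucial point I would then establish is that, for any proper $t$-colouring $\phi$ of $S$, the number $n_1$ of proper $t$-colourings of $G_1$ restricting to $\phi$ on $S$ does not depend on $\phi$. Since $S$ induces a complete graph, its proper $t$-colourings are exactly the injections $S\to\{1,\dots,t\}$; the symmetric group on the $t$ colours acts transitively on these injections and sends proper colourings of $G_1$ to proper colourings of $G_1$, so every $\phi$ has the same number $n_1$ of extensions to a proper colouring of $G_1$, and likewise the same number $n_2$ of extensions to $G_2$. Summing over the $P(K_k,t)$ choices of $\phi$ then gives $P(G_i,t)=P(K_k,t)\,n_i$ for $i=1,2$, while the previous paragraph gives $P(G,t)=\sum_{\phi}n_1 n_2=P(K_k,t)\,n_1 n_2$; multiplying this by $P(K_k,t)$ yields $P(G,t)\,P(K_k,t)=\bigl(P(K_k,t)\,n_1\bigr)\bigl(P(K_k,t)\,n_2\bigr)=P(G_1,t)\,P(G_2,t)$. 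As both sides are polynomials in $t$ agreeing at every integer $t\geq k$, they are equal as polynomials, and dividing by the nonzero polynomial $P(K_k,t)=t(t-1)\cdots(t-k+1)$ gives the claim. The argument is routine throughout; the only points needing a moment's care are this transitivity claim and the implicit facts that each $P(H,t)$ is a polynomial in $t$ and that $P(K_k,t)$ is not identically zero, which together justify the final extension and division.
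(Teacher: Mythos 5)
Your proof is correct. The paper states this proposition as a standard fact without proof, and your argument --- counting extensions of a fixed injective colouring of the complete cut $S$, using the transitive action of the colour permutations to see that the number of extensions into each $G_i$ is independent of the chosen colouring of $S$, and then passing from integers $t\geq k$ to the polynomial identity --- is the standard and complete way to establish it.
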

The next proposition is easily proven using Propositions~\ref{prop:delcont} and~\ref{prop:factorcomplete}. The operation involved is often called a \emph{Whitney $2$-switch}.
\begin{proposition}\label{prop:WhitneySwitch}
Let $G$ be a graph and $\{x,y\}$ be a $2$-cut of $G$. Let $C$ denote a connected component of $G - \{x,y\}$. Define $G'$ to be the graph obtained from the disjoint union of $G-C$ and $C$ by adding for all $z \in V(C)$ the edge $xz$ (respectively $yz$) if and only if $yz$ (respectively $xz$) is an edge of $G$. Then we have $P(G,t) = P(G',t).$
\end{proposition}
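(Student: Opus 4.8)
The plan is to reduce the whole statement to two applications of Proposition~\ref{prop:factorcomplete} over the complete graph $K_2$ sitting on the cut-set $\{x,y\}$, using Proposition~\ref{prop:delcont} on a ``virtual'' edge $xy$ when that edge is not already present. To set up, let $H := G - C$ (the graph obtained by deleting the vertices of $C$, which still contains $x$ and $y$) and $H_C := G[V(C)\cup\{x,y\}]$, the $\{x,y\}$-bridge containing $C$. Since $C$ is a component of $G-\{x,y\}$, every edge of $G$ lies in $H$ or in $H_C$, so $G = H \cup H_C$ with $V(H)\cap V(H_C) = \{x,y\}$. Writing $H_C'$ for the graph obtained from $H_C$ by applying the $x\leftrightarrow y$ relabelling on $C$ prescribed in the statement, we likewise get $G' = H \cup H_C'$ with the same intersection.

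First I would settle the case $xy\in E(G)$. Then $G[\{x,y\}] = K_2$, so Proposition~\ref{prop:factorcomplete} gives $P(G,t) = P(H,t)P(H_C,t)/P(K_2,t)$ and, verbatim, $P(G',t) = P(H,t)P(H_C',t)/P(K_2,t)$. Hence it suffices to prove $P(H_C,t) = P(H_C',t)$, and for this I would write down an explicit isomorphism: the map fixing $V(C)$ pointwise and transposing $x$ with $y$. Indeed $xy$ is an edge of both $H_C$ and $H_C'$, all edges inside $C$ are untouched, and by construction an edge $xz$ (respectively $yz$) of $H_C'$ corresponds to the edge $yz$ (respectively $xz$) of $H_C$; so this map is a graph isomorphism $H_C' \to H_C$ and the two chromatic polynomials coincide.

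For the general case I would introduce the virtual edge. Applying Proposition~\ref{prop:delcont} to the edge $xy$ of $G+xy$ yields $P(G,t) = P(G+xy,t) + P(G/xy,t)$, and similarly $P(G',t) = P(G'+xy,t) + P(G'/xy,t)$. Now $\{x,y\}$ is still a $2$-cut of $G+xy$ (removing $\{x,y\}$ is unaffected by adding $xy$) and the relabelling of $C$ commutes with adding $xy$, so the previous paragraph applied to $G+xy$ gives $P(G+xy,t) = P(G'+xy,t)$. On the other hand, contracting $xy$ merges $x$ and $y$ into a single vertex to which $C$ is attached, and after that identification the $x\leftrightarrow y$ swap does nothing, so $G/xy$ and $G'/xy$ are literally the same graph; hence $P(G/xy,t) = P(G'/xy,t)$. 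Adding the two identities gives $P(G,t) = P(G',t)$.

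The only step needing any care — the ``main obstacle'', such as it is for so elementary a statement — is the bookkeeping around the missing edge $xy$: one must check that adding and contracting $xy$ interact correctly with the relabelling of $C$ and that the $2$-cut property is preserved, after which the two genuinely substantive ingredients (the $K_2$-factoring and the transposition isomorphism) finish the argument. Everything else is routine.
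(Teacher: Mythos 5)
Your proof is correct and follows exactly the route the paper indicates: the paper offers no written proof, remarking only that the proposition "is easily proven using Propositions~\ref{prop:delcont} and~\ref{prop:factorcomplete}", and your argument — add/contract the virtual edge $xy$, factor over $K_2$, and observe that the relabelled bridge is isomorphic to the original via the transposition of $x$ and $y$ while the contractions coincide — is precisely that intended argument, carried out carefully.
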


If $G'$ can be obtained from $G$ by a sequence of Whitney $2$-switches, then $P(G,t) = P(G',t)$ and we say $G$ and $G'$ are \emph{Whitney equivalent}.

\begin{definition}
Let $G$ be a graph. We say $G$ has property $\Delta$ if the following conditions hold:
\begin{list}{-}{}
\item $G$ is non-separable but not 3-connected.
\item For every $2$-cut $\{x,y\}$, $xy \notin E(G)$ and $G$ has precisely three $\{x,y\}$-bridges, all of which are separable.
\end{list}
\end{definition}

The property $\Delta$ characterises the class of \emph{generalised triangles} defined by Jackson in~\cite{32/27Jackson}. Such graphs can be formed from a triangle by repeatedly replacing an edge by two paths of length $2$.

\begin{subsection}{Hamiltonian Paths}\label{subsec:Hampaths}
For each natural number $k\geq 1$, let $H_k$ denote the graph obtained from a path $x_1 x_2 \dots x_{2k+3}$ by adding the edges $x_1 x_4$, $x_{2k}x_{2k+3}$, and all edges $x_{i}x_{i+4}$ for $i \in \{2, 4, 6, \dots, 2k-2\}$. Also define $H_0 = K_3$ and let $\mathcal{H} = \{H_i : i \in \mathbb{N}_0\}$. In~\cite{HamPathThomassen}, Thomassen showed that if $H$ is a smallest counterexample to Theorem~\ref{thm:HamPathThomassen}, then $H$ is isomorphic to $H_k$ for some $k \in \mathbb{N}_0$. Since $t_0$ is taken as the infimum of the non-trivial chromatic roots of all $H_i \in \mathcal{H}$, a contradiction follows. 

For fixed $t \in (1, t_0]$ the value of the chromatic polynomial of $H_k$ at $t$ can be expressed as 
\begin{equation}\label{eq:H_k}
P(H_{k},t) = A \alpha^k + B \beta^k
\end{equation}
where $A, B, \alpha$ and $\beta$ are constants depending on $t$, defined by the following relations~\cite{HamPathThomassen}.
\begin{eqnarray}
\delta = \sqrt{(t-2)^4 +4(t-1)^2 (t-2)}\label{eqn:delta}\\
\alpha = \frac{1}{2}((t-2)^2 + \delta ), \:\:\: \beta = \frac{1}{2}((t-2)^2 - \delta )\label{eqn:alphabeta}\\
A+B = t(t-1)(t-2)\label{eqn:A+B}\\
A\alpha + B\beta = t(t-1)((t-2)^3 + (t-1)^2)
\end{eqnarray}
Noting that $\alpha + \beta = (t-2)^2$, we have more explicitly
$$A = \frac{1}{\delta}t(t-1)((t-2)\alpha + (t-1)^2)$$
$$B = t(t-1)(t-2)-A.$$
As stated in~\cite{HamPathThomassen}, it can be seen that for $t \in (1,t_0)$, $0<\beta<\alpha<1$ and $0<B<-A<1$. The graphs in $\mathcal{H}$ and the quantities defined above will play an important role in our result.

\end{subsection}

\begin{section}{A Special Class of Graphs with $3$-leaf Spanning Trees}\label{sec:SpecialClass}

Let $F_k = H_k -x_1x_2$. If $G$ is a graph, $\{x,y\}$ is a $2$-cut of $G$ and $B$ is an $\{x,y\}$-bridge, then we write $B = F(x,y,k)$ to indicate that $B$ is isomorphic to $F_k$, where $x$ is identified with $x_1$ and $y$ is identified with $x_2$ in $G$. For $i,j,k \in \mathbb{N}_0$, define $G_{i,j,k}$ to be the graph composed of two vertices $x$ and $y$, and three $\{x,y\}$-bridges $F(x,y,i)$, $F(x,y,j)$ and $F(y,x,k)$. Figure~\ref{fig:G423} shows the graph $G_{4,2,3}$. Note that if $j=0$, then $G_{i,j,k}$ is isomorphic to $H_{i+k+2}$.

\begin{figure}
\centering
\includegraphics[scale=1]{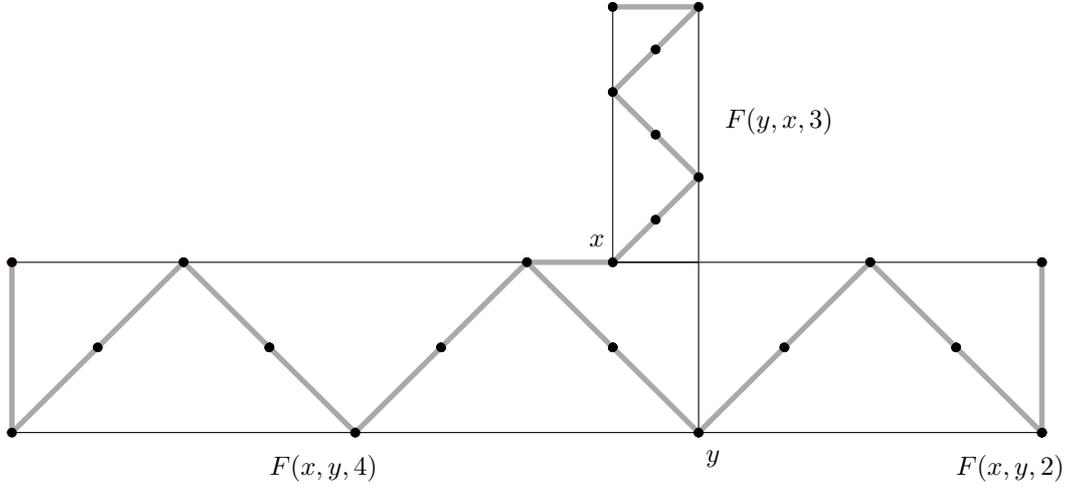}
\caption{The graph $G_{4,2,3}$ and a $3$-leaf spanning tree thereof.}\label{fig:G423}
\end{figure}

Lemma~\ref{lem:BridgeCharacThomassen} characterises particular bridges of a graph satisfying property $\Delta$. It can be found in the work of Thomassen~\cite{HamPathThomassen} and will be useful for us as a lemma.

\begin{lemma}\label{lem:BridgeCharacThomassen}
Let $G$ be a graph with property $\Delta$, $\{x,y\}$ be a $2$-cut of $G$ and $B$ be an $\{x,y\}$-bridge of $G$.

\begin{enumerate}[(a)]
\item If $B$ contains a Hamiltonian path $P$ starting at $x$ and ending at $y$, then $B = F(x,y,0)$, i.e. $B$ is a path of length 2.
\item If $B$ contains a path $P$ starting at $y$ and covering all vertices of $B$ except for $x$, then $B = F(x,y,k)$ for some $k \in \mathbb{N}_0$.
\end{enumerate} 
\end{lemma}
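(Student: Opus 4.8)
The plan is to prove (b) by induction on $|V(B)|$ and to deduce (a) from it. An elementary fact used twice: for $k\ge 1$ the graph $H_k$ has no Hamiltonian cycle, since $x_1$ and $x_3$ each have degree $2$ with neighbourhood $\{x_2,x_4\}$, so a Hamiltonian cycle would use all four edges $x_1x_2,\,x_1x_4,\,x_2x_3,\,x_3x_4$ and therefore contain the $4$-cycle $x_1x_2x_3x_4$ as a component, which is impossible as $|V(H_k)|=2k+3>4$. Granting (b), part (a) follows: a Hamiltonian path of $B$ from $x$ to $y$, with its endpoint $x$ deleted and then reversed, is a path of $B-x$ starting at $y$ and covering $V(B)\setminus\{x\}$, so (b) yields $B=F(x,y,k)=H_k-x_1x_2$; but then that path together with the edge $x_1x_2$ is a Hamiltonian cycle of $H_k$, forcing $k=0$.

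For (b), I first pin down the block structure of $B$. Since $G$ has property $\Delta$, every $\{x,y\}$-bridge is separable, so $B$ has at least two blocks and at least three vertices; the base case $|V(B)|=3$ is then the path $F(x,y,0)$ (as $xy\notin E(G)$). Because $G$ is non-separable and no vertex of $B$ other than $x,y$ has a neighbour outside $B$, one checks that neither $x$ nor $y$ is a cut-vertex of $B$, that every cut-vertex of $B$ separates $x$ from $y$, and that every leaf block of $B$ contains $x$ or $y$; hence the block-cut tree of $B$ is a path $B_1c_1B_2\cdots c_{m-1}B_m$ with $x\in V(B_1)\setminus\{c_1\}$, $y\in V(B_m)\setminus\{c_{m-1}\}$, $m\ge 2$. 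Let $P$ be the assumed Hamiltonian path of $B-x$ starting at $y$; its restriction to $B_1$ is a Hamiltonian path of $B_1-x$ starting at $c_1$. If $B_1\ne K_2$, then $B_1$ is $2$-connected with an interior vertex, so $\{x,c_1\}$ is a $2$-cut of $G$; property $\Delta$ then gives $xc_1\notin E(G)$ and, since a $2$-cut of a property-$\Delta$ graph has three bridges rather than two, forces $B_1-\{x,c_1\}$ to be disconnected, so $c_1$ is a cut-vertex of $B_1-x$ --- contradicting that a Hamiltonian path cannot start at a cut-vertex. Hence $B_1=K_2$: $x$ is a pendant vertex of $B$ with unique neighbour $c_1$.

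Now I peel off this pendant. Since $x$ is pendant at $c_1$, the pair $\{c_1,y\}$ is a $2$-cut of $G$, separating the nonempty set $V(B)\setminus\{x,c_1,y\}$ from the rest of $G$; by property $\Delta$ it has exactly three bridges, namely one containing $x$ and two bridges $G[S_1\cup\{c_1,y\}]$ and $G[S_2\cup\{c_1,y\}]$, where $S_1,S_2$ are the two components of $B-x-\{c_1,y\}$, each with fewer vertices than $B$. Using that $c_1y\notin E(G)$ (property $\Delta$) and that there is no edge between $S_1$ and $S_2$, an analysis of $P$ shows it leaves $y$ into one side, say $S_1$, covers $S_1$ entirely, passes through $c_1$, and then covers $S_2$; so $G[S_1\cup\{c_1,y\}]$ has a Hamiltonian path from $y$ to $c_1$, and $G[S_2\cup\{c_1,y\}]-y$ has a Hamiltonian path from $c_1$. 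Applying the inductive hypothesis to these two smaller bridges (with $y$ playing the role of the deleted vertex and $c_1$ that of $y$), together with the Hamiltonian-cycle fact for the first, gives $G[S_1\cup\{c_1,y\}]=F(y,c_1,0)$, a path of length $2$ with endpoints $y,c_1$ and midpoint $s_1$, and $G[S_2\cup\{c_1,y\}]=F(y,c_1,k')$ for some $k'\ge 0$. Then $B$ is the union of the pendant edge $xc_1$, the path $y\,s_1\,c_1$, and $F(y,c_1,k')$, pasted along $c_1$ (and along $y$ for the last two) with no other edges --- which is exactly how $F_{k'+1}=H_{k'+1}-x_1x_2$ decomposes along its $2$-cut $\{x_4,x_2\}$, under $x\leftrightarrow x_1$, $c_1\leftrightarrow x_4$, $s_1\leftrightarrow x_3$, $y\leftrightarrow x_2$ and the identity $H_{k'+1}-x_1-x_3\cong F_{k'}$ (with $x_2,x_4$ the distinguished vertices). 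Hence $B=F(x,y,k'+1)$, closing the induction.

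I expect the main difficulty to be this final identification: matching the reassembled graph with $F_{k'+1}$ rests on the combinatorial identity $H_{k'+1}-x_1-x_3\cong F_{k'}$, on tracking precisely which vertices are identified, and on checking that the three pieces of $B$ overlap only in $c_1$ and $y$. A secondary point needing care --- and the place where property $\Delta$ is essentially used --- is confirming that $\{c_1,y\}$ is a $2$-cut with exactly three bridges and that its two interior bridges inherit the Hamiltonian-path hypothesis from $P$; both follow from $x$ being a pendant together with a close reading of how $P$ crosses the cut.
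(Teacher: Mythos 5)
Your proof is correct and follows essentially the same route as the paper's: induction on $|V(B)|$ for part (b), showing that $x$ is a pendant vertex of $B$ attached to a cut-vertex ($v$ in the paper, your $c_1$), that $\{v,y\}$ is then a $2$-cut whose two bridges inside $B$ are a path of length $2$ and, by induction, some $F(y,v,k-1)$, and reassembling to get $F(x,y,k)$. The only genuine variation is local: the paper proves (a) directly (the Hamiltonian path shows neither $\{x,v\}$ nor $\{y,v\}$ can have three bridges, so property $\Delta$ forces $|V(B)|=3$), whereas you derive (a) from (b) via the observation that $H_k$ has no Hamiltonian cycle for $k\geq 1$ --- both are sound, and your explicit verification of the final identification $B\cong F(x,y,k'+1)$ fills in a step the paper dispatches with ``it follows that $B=F(x,y,k)$.''
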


\begin{proof}
\begin{enumerate}[(a)]
\item Since $G$ has property $\Delta$, $B$ is separable and has a cut-vertex $v$. The Hamiltonian path $P$ of $B$ shows that neither of $G-\{x,v\}$ and $G-\{y,v\}$ can have more than two components. Thus since $G$ has property $\Delta$, neither $\{x,v\}$ nor $\{y,v\}$ is a cut-set of $G$ and so $|V(B)| = 3$. Since $B$ is connected and $xy \not\in E(G)$, $B$ is a path of length $2$ as claimed. 
\item Again $B$ is separable and has a cut-vertex $v$. If $|V(B)|=3$ then $B = F(x,y,0)$, so we may assume that $|V(B)|\geq 4$ and the result is true for all bridges on fewer vertices. At least one of $\{x,v\}$ or $\{y,v\}$ is a $2$-cut of $G$, but $\{x,v\}$ cannot be since $G-\{x,v\}$ has at most two components. Thus $xv\in E(G)$ and $\{y,v\}$ is a $2$-cut of $G$ with precisely three $\{y,v\}$ bridges, two of which, say $B_1$ and $B_2$, are contained in $B$. Suppose without loss of generality that $B_1$ contains the subpath of $P$ from $x$ to $v$. Then $P[V(B_1)]$ is a Hamiltonian path of $B_1$ and so by part~(a), $B_1 = F(y,v,0)$. $P[V(B_2)]$ is a path in $B_2$, starting at $v$ and covering all vertices of $B_2$ except for $y$. By induction, $B_2 = F(y,v,k-1)$ for some $k-1 \in \mathbb{N}_0$. It follows that $B = F(x,y,k)$.
\end{enumerate}
\end{proof}

It is easy to see that each $G_{i,j,k}$ has property $\Delta$ and contains a $3$-leaf spanning tree. The following result shows that it is enough to only consider these graphs.

\begin{lemma}\label{lem:charac}
Let $G$ be a graph with a $3$-leaf spanning tree $T$. If $G$ satisfies property $\Delta$, then $P(G,t) = P(G_{i,j,k},t)$ for some $i,j,k \in \mathbb{N}_0$.
\end{lemma}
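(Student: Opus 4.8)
The plan is to exploit the $3$-leaf spanning tree $T$ of $G$ together with property $\Delta$ to show that $G$ is Whitney equivalent to some $G_{i,j,k}$; the conclusion $P(G,t)=P(G_{i,j,k},t)$ then follows from Proposition~\ref{prop:WhitneySwitch}. First I would analyse the structure of $T$. A tree with at most three leaves is either a path or a \emph{spider}: a single branch vertex $w$ of degree $3$ from which three paths (legs) emanate, ending at the three leaves. If $T$ is a path then $G$ has a Hamiltonian path and we are essentially in the setting of Theorem~\ref{thm:HamPathThomassen}; in this case, since $G$ has property $\Delta$, fix any $2$-cut $\{x,y\}$ and observe that the Hamiltonian path, restricted to each of the three $\{x,y\}$-bridges, either is a Hamiltonian path of that bridge between its two attachment vertices or covers all but one attachment vertex, so Lemma~\ref{lem:BridgeCharacThomassen} forces each bridge to be some $F(\cdot,\cdot,\cdot)$, and one checks the endpoints match up to give $G \cong G_{i,0,k} \cong H_{i+k+2}$. (Here $j=0$ is allowed, so this subcase is subsumed.)

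The main case is when $T$ is a spider with branch vertex $w$ and legs $L_1, L_2, L_3$. Since $G$ is not $3$-connected, fix a $2$-cut $\{x,y\}$; by property $\Delta$ there are exactly three $\{x,y\}$-bridges $B_1, B_2, B_3$, each separable, and $xy \notin E(G)$. The key step is to understand how the spider $T$ interacts with this bridge decomposition. Each edge of $T$ lies in exactly one bridge, so $T$ decomposes into subtrees $T \cap B_r$; because $x$ and $y$ are the only vertices shared between distinct bridges, and $T$ is connected, the "contracted" picture where each $B_r$ is collapsed must itself be a tree on the shared vertices $x,y$. The branch vertex $w$ lies in one particular bridge, say $B_1$. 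I would argue that in the two bridges $B_2, B_3$ not containing $w$, the tree $T$ restricts to a path, and this path must enter and leave the bridge only at $x$ and $y$; using separability of the bridge and property $\Delta$ exactly as in Lemma~\ref{lem:BridgeCharacThomassen}(a)--(b), each of $B_2$ and $B_3$ is therefore of the form $F(x,y,\cdot)$ or $F(y,x,\cdot)$. The remaining bridge $B_1$ contains $w$ and all three legs may pass through it, but at most one leg can "escape" into $B_2$ and at most one into $B_3$ (since each of $B_2,B_3$ meets $T$ in a path with its two ends among $\{x,y\}$, i.e. it is traversed by a single leg), so $T \cap B_1$ is again a tree with at most three leaves among the vertices $\{x,y\}$ plus the genuine leaves — in fact one shows $T\cap B_1$ is a path or spider covering $B_1$ appropriately, and one recurses on $B_1$.

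The cleanest way to make the recursion go through is to prove a slightly stronger statement by induction on $|V(G)|$: if $B$ is an $\{x,y\}$-bridge of a graph with property $\Delta$ and $B$ admits a spanning tree that is a path with both ends in $\{x,y\}$, or a path covering all of $B$ except one of $x,y$, or a spider whose legs account for $x$ and $y$ suitably, then $B$ is Whitney equivalent to an $F$ or to a $G_{i,j,k}$-type piece. Applying this to the three bridges of $G$ and reassembling (noting that a Whitney $2$-switch performed entirely inside a bridge is a Whitney $2$-switch of $G$, and that after the switches the attachment data of each bridge matches the definition of $G_{i,j,k}$) yields $G$ Whitney equivalent to $G_{i,j,k}$ for suitable $i,j,k$, and hence $P(G,t)=P(G_{i,j,k},t)$.

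The main obstacle I anticipate is the bookkeeping in the spider case: one must rule out pathological configurations — e.g. the branch vertex $w$ coinciding with $x$ or $y$, a leg of $T$ that starts and ends inside the same bridge $B_2$ (which would make $T\cap B_2$ disconnected or force $w\in B_2$), or the possibility that two legs both traverse $B_2$ — and show that after a bounded number of Whitney switches the "non-path" behaviour is confined to a single bridge whose internal structure is again a smaller instance of the same problem. Getting the induction hypothesis stated in exactly the right generality, so that each of the three bridges falls under it and the reassembly is literally $G_{i,j,k}$, is where the care is needed; the graph-theoretic content beyond that is a direct extension of Lemma~\ref{lem:BridgeCharacThomassen}.
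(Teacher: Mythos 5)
Your overall strategy (Whitney switches plus Lemma~\ref{lem:BridgeCharacThomassen}) is the right one, and the Hamiltonian-path case is handled essentially as in the paper. But in the spider case there is a genuine gap, and it is precisely at the point you flag as ``where the care is needed.'' You fix an \emph{arbitrary} $2$-cut $\{x,y\}$, place the branch vertex $w$ in some bridge $B_1$, and propose to recurse on $B_1$ after disposing of $B_2$ and $B_3$. Two things go wrong. First, your claim that each bridge not containing $w$ meets $T$ in a single path traversed by one leg is false in general: two different legs can enter the same bridge, one through $x$ and one through $y$, so that $T$ restricted to that bridge is two disjoint paths (this configuration genuinely occurs and is not pathological; it is exactly Case~2 of the paper's proof, which needs its own argument locating a cut-vertex of the bridge and performing a switch). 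Second, the recursion on $B_1$ is never actually set up: you do not state the strengthened induction hypothesis, and it is not clear that the bridge containing $w$, with $x$ and $y$ as attachment vertices and $w$ in its interior, is a smaller instance of the same problem in a form to which Lemma~\ref{lem:BridgeCharacThomassen} or the lemma itself applies.

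The missing idea that makes the paper's proof work without any recursion is the \emph{choice} of the $2$-cut. One first shows, by taking a $2$-cut minimizing the bridge containing the degree-$3$ vertex $v$ of $T$ and using property $\Delta$ (every bridge is separable, and a bridge with a cut-vertex $u$ yields a smaller cut-set $\{x,u\}$ or $\{y,u\}$), that there is a $2$-cut containing $v$ itself; a second minimality argument then yields a $2$-cut $\{x,y\}$ with $y=v$ such that the three $T$-neighbours of $v$ lie in three different bridges. With this cut, each leg of the spider starts at the cut vertex $y$ and enters its own bridge, so Lemma~\ref{lem:BridgeCharacThomassen}(a)--(b) applies directly to each bridge, and only the one case where a leg passes through $x$ into a second bridge (two disjoint paths in that bridge) needs extra work. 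Without this choice of cut your argument does not close, so as written the proposal is incomplete.
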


\begin{proof}
We show that $G$ is Whitney equivalent to $G_{i,j,k}$ for some $i,j,k \in \mathbb{N}_0$. By the remark following Proposition~\ref{prop:WhitneySwitch}, this implies the result. If $G$ contains a Hamiltonian path then for any $2$-cut $\{x,y\}$, one of the three $\{x,y\}$-bridges satisfies Lemma~\ref{lem:BridgeCharacThomassen}(a), whilst the other two satisfy Lemma~\ref{lem:BridgeCharacThomassen}(b). Thus $G$ is Whitney equivalent to $G_{i,0,k}$ for some $i,k \in \mathbb{N}_0$ and we are done. 

Now we may assume that $v$ is a vertex of degree $3$ in $T$. We first find a useful $2$-cut. Since $G$ is not $3$-connected, there is some cut set of size $2$. Choose such a $2$-cut $S = \{x,y\}$ so that the smallest $S$-bridge containing $v$ is as small as possible. We claim that $v \in S$. If this is not the case then let $B$ be the $S$-bridge containing $v$. Since $B$ is separable there is some cut-vertex $u$ of $B$. Also, since $v$ has degree $3$ in $T$, $|V(B)| \geq 4$. This implies that one of $\{x,u\}$ or $\{y,u\}$ is a smaller cut-set containing $v$, a contradiction. We now claim to be able to find a $2$-cut $S$ such that $v \in S$, and the three neighbours of $v$ in $T$, denoted $v_1, v_2$ and $v_3$, lie in three different $S$-bridges. If this is not already the case, choose a $2$-cut $S$ such that $v \in S$, and the $S$-bridge containing two of $v_1, v_2, v_3$ is as small as possible. By a similar argument we find a $2$-cut with the desired property.

Fix the $2$-cut $S = \{x,y\}$ so that $y$ has degree $3$ in $T$. For $i \in \{1,2,3\}$, let $B_i$ be an $S$-bridge, and $y_i \in V(B_i)$ be the neighbours of $y$ in $T$. Finally for $i \in \{1,2,3\}$ we let $P_i$ be the unique path in $T$ from $y$ to a leaf of $T$, which contains the vertex $y_i$. Suppose without loss of generality that $x$ lies on $P_2$. We distinguish two cases.

\begin{figure}
\centering
\includegraphics[scale=0.8]{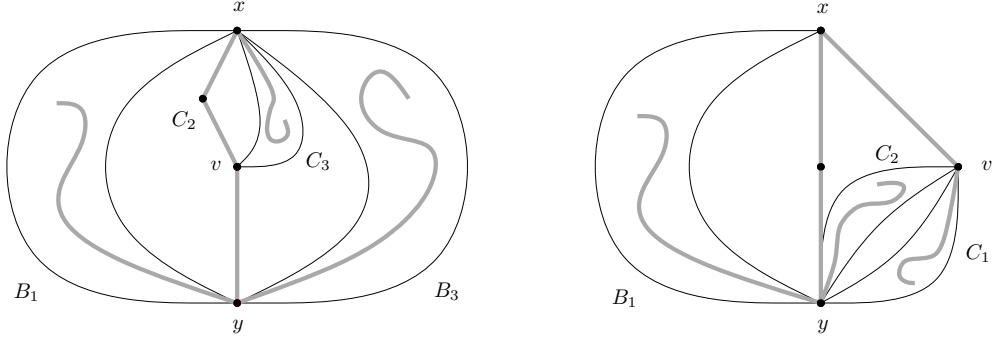}
\caption{Cases 1 and 2 in the proof of Lemma~\ref{lem:charac}.}\label{fig:charac}
\end{figure}

\textbf{Case 1: $V(P_2) = V(B_2)$.}\\
$P_1$ and $P_3$ are paths in $B_1$ and $B_3$ respectively, which start at $y$ and cover all vertices of $B_1, B_3$ except $x$. By Lemma~\ref{lem:BridgeCharacThomassen}(b), $B_1 = F(x,y,i)$ and $B_3 = F(x,y,k)$ for some $i,k \in \mathbb{N}_0$. If $P_2$ ends at $x$ then Lemma~\ref{lem:BridgeCharacThomassen}(a) implies $B_2 = F(x,y,0)$ and by performing a Whitney $2$-switch of $B_3$ about $\{x,y\}$ we are done. So suppose $P_2$ ends at some vertex $z$ other than $x$. $B_2$ is separable and so there is a cut-vertex $v$ of $B_2$. Since $P$ contains a subpath connecting $y$ and $x$, it follows that $v$ lies between $y$ and $x$ on $P$. So let $Q_1$, $Q_2$ and $Q_3$ be the sub-paths of $P_2$ from $y$ to $v$, $v$ to $x$ and $x$ to $z$ respectively. Now $G-\{y,v\}$ contains at most two components, and therefore $Q_1$ is the edge $yv$. Note that $|V(B)| \geq 4$, so $G-\{x,v\}$ contains at least two components. It follows from property $\Delta$ that $G$ has precisely three $\{x,v\}$-bridges, two of which are contained in $B_2$. Let $C_2$ and $C_3$ be the $\{x,v\}$-bridges containing $Q_2$ and $Q_3$ respectively. $Q_2$ is a Hamiltonian path of $C_2$ from $v$ to $x$. Thus by Lemma~\ref{lem:BridgeCharacThomassen}(a), $C_2 = F(x,v,0)$. Similarly $C_3$ contains a path starting at $x$ and covering all vertices of $C_3$ except for $v$. By Lemma~\ref{lem:BridgeCharacThomassen}(b) it follows that $B_3 = F(v,x, j-1)$ for some $j-1 \in \mathbb{N}_0$. Now performing a Whitney $2$-switch of $C_3$ with respect to $\{x,v\}$ gives a new graph, where the $\{x,y\}$-bridge corresponding to $B_2$ is $F(x,y,j)$. Finally, performing a Whitney $2$-switch of $B_3$ about $\{x,y\}$ gives a graph isomorphic to $G_{i,j,k}$. This completes the proof.

\textbf{Case 2: $V(P_2) \supset V(B_2)$.}\\
Suppose without loss of generality that $P_2$ also contains vertices of $B_3-x-y$. As before, Lemma~\ref{lem:BridgeCharacThomassen}(b) implies that $B_1 = F(x,y,i-1)$ for some $i-1 \in \mathbb{N}_0$ and $B_2 = F(x,y,0)$ by Lemma~\ref{lem:BridgeCharacThomassen}(a). Now $T[V(B_3)]$ consists of two disjoint paths $Q_1$ and $Q_2$, starting at $x$ and $y$ respectively. Since $B_3$ is separable, there is a cut-vertex $v$. Suppose without loss of generality that $v \in V(Q_1)$. If this is not the case we perform a Whitney $2$-switch of $B_3$ with respect to $\{x,y\}$ and proceed similarly. Both $Q_1$ and $Q_2$ contain at least one edge, thus $|V(B_3)| \geq 4$ and at least one of $\{x,v\}$ and $\{y,v\}$ is a $2$-cut of $G$. Suppose for a contradiction that $\{x,v\}$ is a $2$-cut. $G - \{x,v\}$ has at least two components and so $G$ has precisely three $\{x,v\}$-bridges, two of which are contained in $B_3$. Now $G- \{v,y\}$ can have at most two components and as such $v$ is the unique neighbour of $y$ in $B_3$. But $v \in V(Q_1)$ and therefore $Q_2$ is a single vertex. This contradicts the fact that $Q_2$ contains at least one edge.

So we may assume that $\{y,v\}$ is a $2$-cut of $G$, and $v$ is the unique neighbour of $x$ in $B_3$. Now, as before, $G$ has precisely three $\{v,y\}$-bridges, two of which are contained in $B_3$. Denote the two $\{v,y\}$-bridges which are contained in $B_3$ by $C_1$ and $C_2$, where $Q_2$ is contained in $C_2$. It is now easy to see that $Q_2$ is a path of $C_2$ starting at $y$ and containing all vertices of $C_2$ except for $v$. Similarly, $Q_1[V(C_1)]$ is a path of $C_1$ starting at $v$ and containing all vertices of $C_1$ except for $y$.  Thus by Lemma~\ref{lem:BridgeCharacThomassen}(b), $C_1 = F(y,v,k)$ and $C_2 = F(v,y,j)$ for some $j,k \in \mathbb{N}_0$.

Now consider the $2$-cut $\{v,y\}$ of $G$. It has three bridges two of which are $C_1$ and $C_2$ found in $B_3$. The third $\{v,y\}$-bridge, denoted $C_3$, is composed of the edge $xv$ and the two $\{x,y\}$-bridges, $F(x,y,0)$ and $F(x,y,i-1)$ of $G$. By performing a Whitney $2$-switch of $F(x,y,i-1)$ at $\{x,y\}$, we get a new graph $G'$, where the $\{v,y\}$-bridge of $G'$ corresponding to $C_3$ is precisely $F(v,y,i)$. Now $G'$ is isomorphic to $G_{i,j,k}$ and so $P(G,t) = P(G_{i,j,k},t)$. This completes the proof.
\end{proof}

\end{section}
\begin{subsection}{A Zero Free Interval for $P(G_{i,j,k},t)$}

We now determine the behaviour of the chromatic roots of each $G_{i,j,k}$. Here $t_0$ is the real number defined in Theorem~\ref{thm:HamPathThomassen} and $\mathcal{H} = \{H_i : i \in \mathbb{N}_0\}$ is the family of graphs defined in Section~\ref{subsec:Hampaths}. It is easily seen that $G_{i,0,k} = H_{i+k+2}$. If $j=1$ then by adding and contracting the edge from $x$ to the cut vertex of $F(x,y,j)$ we find that $$P(G_{i,1,k},t) = (t-2)^2 P(H_{i+k+2},t) + \frac{(t-1)}{t} P(H_{i+1},t)P(H_{k},t).$$ Finally if $j \geq 2$ then using Proposition~\ref{prop:delcont} and~\ref{prop:factorcomplete} gives the recurrence $$P(G_{i,j,k},t) = (t-2)^2 P(G_{i,j-1,k},t) + (t-1)^2 (t-2) P(G_{i,j-2,k},t).$$

Solving this explicitly for fixed $t \in (1, t_0]$ gives a solution of the form $P(G_{i,j,k},t) = C \alpha^j + D \beta^j$ where $C$ and $D$ are constants depending on $i,k$ and $t$. Recall $\alpha$ and $\beta$ are defined in~(\ref{eqn:alphabeta}). The initial conditions corresponding to $j=0,1$ are
\begin{equation}\label{eqn:initj=0}
C+D = P(H_{i+k+2},t)
\end{equation}
\begin{equation}\label{eqn:initj=1}
C\alpha + D\beta = (t-2)^2 P(H_{i+k+2},t) + \frac{t-1}{t} P(H_{i+1},t) P(H_{k},t).
\end{equation}
Multiplying~(\ref{eqn:initj=0}) by $\beta$ and subtracting the resulting equation from (\ref{eqn:initj=1}) gives 
\begin{align}
C(\alpha - \beta) &= ((t-2)^2 - \beta)P(H_{i+k+2},t) + \frac{t-1}{t}P(H_{i+1},t)P(H_{k},t)\notag \\
&= \alpha P(H_{i+k+2},t) + \frac{t-1}{t}P(H_{i+1},t)P(H_{k},t) \label{eqn:init-init}.
\end{align}
For convenience we define $\gamma = \gamma(t) = \alpha t /(t-1) >0,$ for $t \in (1,t_0]$. Let $t_1$ be the smallest real root of the polynomial $(t-2)^6 +4(t-1)^2(t-2)^3 -(t-1)^4$. We claim that for $t \in (1, t_1]$
\begin{equation}\label{eqn:gamma}
 \gamma\beta < -A \leq \gamma \alpha.
\end{equation}

The left inequality follows since $\gamma \beta  = -t(t-1)(t-2)$ and so by~(\ref{eqn:A+B}), $-A-\gamma\beta = B>0.$ The right side follows since for $t \in (1,t_0]$
\vspace{6pt}

\begin{tabular}{lrcl}
 & $-A$ & $\leq$ & $\gamma\alpha$ \\ 
$\iff$ &  $-\frac{1}{\delta}t(t-1)((t-2)\alpha+(t-1)^2)$ & $\leq$ & $\frac{t}{t-1}(t-2)((t-2)\alpha +(t-1)^2)$ \\ 
$\iff$ & $-(t-1)^2$ & $\geq$ & $(t-2)\delta$\\ 
$\iff$ & $ (t-1)^4$ & $\leq$ & $(t-2)^2 \delta^2.$
\end{tabular}
\vspace{6pt}

\noindent Using~(\ref{eqn:delta}), the final inequality is seen to be satisfied when the aforementioned polynomial is non-negative.

Since each $H \in \mathcal{H}$ has an odd number of vertices, Theorem~\ref{thm:HamPathThomassen} implies $P(H,t)<0$ for $t \in (1,t_0]$. It now follows that (\ref{eqn:init-init}), and hence $C$, are negative if $$\gamma|P(H_{i+j+2},t)|>P(H_{i+1},t)P(H_{k},t).$$
Indeed for $t \in (1,t_1]$, we have that $0<\beta < \alpha <1$ and $0<B<-A<1$, which together with~(\ref{eq:H_k}) and~(\ref{eqn:gamma}) implies
\begin{align*}
P(H_{i+1},t)P(H_{k},t) &= (A\alpha^{i+1}+B\beta^{i+1})(A\alpha^{k}+B\beta^{k})\\
&= A^2 \alpha^{i+k+1} + AB\alpha^{i+1}\beta^{k} + AB\alpha^{k}\beta^{i+1} + B^2\beta^{i+k+1}\\
&< -A\gamma\alpha^{i+k+2} - B^2\beta^{i+k+1} - B\gamma\beta^{i+k+2} + B^2\beta^{i+k+1}\\
&= \gamma(-A\alpha^{i+k+2} - B\beta^{i+k+2}) = \gamma|P(H_{i+k+2},t)|.
\end{align*}
Since $C<0$,~(\ref{eqn:initj=0}) implies $D<-C$. Finally, since $\alpha>\beta$, we may conclude that $P(G_{i,j,k},t) <0$ for $t \in (1,t_1]$.

Now suppose that $t \in (t_1, t_0)$ is fixed. Then $-A > \gamma \alpha$. Setting $i+1 = k$ for simplicity we see that $$\frac{P(H_{k},t)^2}{\gamma |P(H_{2k+1},t)|} = \frac{A^2 \alpha^{2k}+2AB\alpha^k \beta^k +B^2 \beta^{2k}}{\gamma (-A \alpha^{2k+1} -B\beta^{2k+1})} \longrightarrow \frac{A^2}{-\gamma A \alpha} = \frac{-A}{\gamma \alpha} >1.$$ as $k \rightarrow \infty$. Thus, for large enough $i$ and $k$, $\gamma|P(H_{i+k+2},t)|< P(H_{i+1},t)P(H_{k},t)$ and hence $C$ is positive. Though~(\ref{eqn:initj=0}) implies that $D$ is negative, since $\alpha > \beta$ it follows that for large enough $j$, $P(G_{i,j,k},t) >0$. Since we have proven that $P(G_{i,j,k},t) <0$ on $(1,t_1]$, we may conclude by continuity that $P(G_{i,j,k},t)$ has a root in $(t_1, t)$. 
\end{subsection}
\end{section}

\begin{section}{Main Result}

To prove Theorem~\ref{thm:mainThm} we shall show that a smallest counterexample has property~$\Delta$. In~\cite{DongKoh}, Dong and Koh extracted the essence of the proofs of Thomassen~\cite{HamPathThomassen} and Jackson~\cite{32/27Jackson} and gave a general method to do this. An important part of that method is the following definition and lemma.

\begin{definition}
Let $\mathcal{G}$ be a family of graphs. $\mathcal{G}$ is called splitting-closed if the following conditions hold for each $G \in \mathcal{G}$.
\begin{list}{-}{}
\item For every complete cut-set $C$ with $|C|\leq 2$, all $C$-bridges of $G$ are in $\mathcal{G}$.
\item For every $2$-cut $\{x,y\}$ such that $xy \not\in E(G)$, and every $\{x,y\}$-bridge $B$, the graphs $B+xy$ and $B/xy$ are in $\mathcal{G}$.
\end{list} 
\end{definition}

It is straightforward to check that the family of non-separable graphs with a $3$-leaf spanning tree is splitting-closed. In fact $\mathcal{G}_k$ is splitting-closed for all $k\geq 2$.

\begin{lemma}(Adapted from~\emph{\cite{DongKoh}})\label{lem:dongKoh}
Let $G$ be a non-separable graph and $\{x,y\}$ be a $2$-cut of $G$ with $\{x,y\}$-bridges $B_1, \dots, B_m$ where $m$ is odd. For fixed $i,j \in [m]$, let $B_{i,j}$ be the graph formed from $B_i \cup B_j$ by adding a new vertex $w$ and the edges $xw$ and $wy$. Let $B_{\cup} = \cup_{k \in [m]\setminus \{i,j\}}B_k$. For fixed $t \in (1,2)$, if $Q(B_{\cup},t)$, $Q(B_{i,j},t)$, $Q(B_{k}+xy,t)$, $Q(B_{k}/xy,t)$ are positive for each $k \in [m]\setminus \{i,j\}$, then $Q(G,t)>0$. 
\end{lemma}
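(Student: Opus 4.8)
The plan is to reduce the claim to an application of the deletion--contraction identity together with the factoring-over-complete-subgraphs proposition, exploiting the parity assumption that $m$ is odd. First I would set up notation carefully: write $G_{i,j} = B_{i,j} \cup B_\cup$, where $B_{i,j}$ and $B_\cup$ overlap precisely in $\{x,y\}$. Since $xy \notin E(G)$ in general, I cannot immediately factor, so the first step is to introduce the edge $xw$ (or rather to work with the auxiliary vertex $w$): the graph $G$ can be recovered from $B_{i,j}$ and $B_\cup$ by identifying their copies of $\{x,y\}$ and deleting $w$, but it is cleaner to relate $P(G,t)$ to the polynomials of the split pieces via the standard identity for graphs glued along a $2$-element vertex set that is not an edge. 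Concretely, for a graph $H$ with a $2$-cut $\{x,y\}$ and $\{x,y\}$-bridges $D_1, D_2$, one has
\[
P(H,t) \, P(K_2,t) = P(D_1 + xy, t)\,P(D_2,t) \;+\; (\text{correction for } xy \notin E),
\]
and more usefully the clean relation $P(H,t) = \frac{1}{t(t-1)}\big(P(D_1+xy,t)P(D_2+xy,t)\cdot\tfrac{1}{?}\big)\dots$; rather than chase this, I would instead use the identity that for each bridge $B_k$ one has, by deletion--contraction on a virtual edge $xy$, $P(B_k, t) = P(B_k + xy, t) + P(B_k / xy, t)$, and that $P$ of the whole graph formed by gluing all bridges along $\{x,y\}$ (with $xy$ present) factors as $\prod_k P(B_k+xy,t) \big/ t^{m-1}(t-1)^{m-1}$ by Proposition~\ref{prop:factorcomplete}, while the version with $xy$ absent factors after a single deletion--contraction step.

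The heart of the argument is then the following combinatorial identity. Using $P(G,t) = P(G+xy,t) + P(G/xy,t)$, and applying Proposition~\ref{prop:factorcomplete} to each of the two terms (in $G+xy$ the set $\{x,y\}$ induces $K_2$; in $G/xy$ it becomes a single vertex $K_1$), one obtains
\[
(-1)\,Q(G,t) \;=\; \text{(expression in } Q(B_k+xy,t),\ Q(B_k/xy,t)\text{)},
\]
and after translating everything into the $Q$-normalisation — recalling $Q(G,t) = (-1)^{|V(G)|}P(G,t)$, and that each deletion--contraction or factoring step shifts the vertex count in a controlled way — the odd-$m$ hypothesis is exactly what is needed to make all the signs line up so that $Q(G,t)$ is expressed as a positive combination of products of the terms $Q(B_{i,j},t)$, $Q(B_\cup,t)$, $Q(B_k+xy,t)$, $Q(B_k/xy,t)$, divided by powers of $Q(K_1,t) = t$ and $Q(K_2,t) = -(-1)^2 t(t-1)\cdot(-1) = \dots$ Here one must be careful: for $t \in (1,2)$ we have $t>0$ and $t-1>0$, so $P(K_2,t) = t(t-1) > 0$ and $Q(K_2,t) = t(t-1) > 0$ as well, and $Q(K_1,t) = -t < 0$ — so I would track these small factors explicitly, since their signs (again governed by parity) are what ultimately force the conclusion. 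The grouping of the $m$ bridges into the pair $\{i,j\}$ (absorbed, with the extra vertex $w$, into $B_{i,j}$) and the remaining $m-2$ bridges (forming $B_\cup$, still an odd-sized-minus... actually $m-2$ is odd) is precisely the device that lets one handle the case analysis uniformly and keeps the number of "leftover" bridges odd so that the induction/sign bookkeeping closes.

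I expect the main obstacle to be the sign and normalisation bookkeeping rather than any deep idea: making sure that every application of Propositions~\ref{prop:delcont} and~\ref{prop:factorcomplete} is paired with the correct power of $(-1)$ coming from the change in $|V|$, and verifying that the resulting coefficients are genuinely positive for $t \in (1,2)$ using only $0 < t-1 < 1 < t < 2$. A secondary subtlety is the role of the auxiliary vertex $w$: adding $w$ with edges $xw, wy$ to $B_i \cup B_j$ contributes a factor $P(P_3 \text{ across } x,y)$-type term, i.e. it behaves like replacing a virtual edge $xy$ by a path of length $2$, which changes the $xy$-adjacency correction in exactly the way needed to match the $m$ odd parity; I would isolate this as a small preliminary computation (the chromatic polynomial of the graph $B_i \cup B_j \cup \{x\text{-}w\text{-}y\}$ in terms of $P(B_i+xy,t)$, $P(B_i/xy,t)$, etc.) before assembling the global identity. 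Once that lemma is in place, the final inequality $Q(G,t) > 0$ follows immediately from the positivity hypotheses on the five families of subgraph polynomials.
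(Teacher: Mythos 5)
There is a genuine gap, and it begins with the parity. Applying deletion--contraction to the virtual edge $xy$ and factoring both resulting graphs over $\{x,y\}$ gives, in the $Q$-normalisation,
\[
Q(G,t)=\frac{\prod_{k}Q(B_k+xy,t)}{t^{m-1}(t-1)^{m-1}}-(-1)^{m-1}\,\frac{\prod_{k}Q(B_k/xy,t)}{t^{m-1}},
\]
so for $m$ \emph{even} this is a sum of positive terms and positivity is immediate --- which is exactly how the paper disposes of the even case in Claim~3 of the main proof, with no need for this lemma. For $m$ \emph{odd}, the hypothesis here, the second term carries a minus sign and you are left with a \emph{difference} of two positive quantities, which is inconclusive. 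Your assertion that ``the odd-$m$ hypothesis is exactly what is needed to make all the signs line up'' is therefore backwards: odd $m$ is precisely the case where the naive decomposition fails, and that failure is the reason the auxiliary graphs $B_{i,j}$ and $B_{\cup}$ are introduced at all.

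What is missing is the actual mechanism by which $Q(B_{i,j},t)>0$ and $Q(B_{\cup},t)>0$ are used to dominate the negative term. Writing $f_k=Q(B_k+xy,t)$ and $g_k=Q(B_k/xy,t)$, the hypothesis on $B_{\cup}$ (which has $m-2$ bridges, again odd) unwinds to $\prod_{k\neq i,j}f_k>(t-1)^{m-3}\prod_{k\neq i,j}g_k$, and the hypothesis on $B_{i,j}$ (three bridges, the path $xwy$ contributing $Q(K_3,t)=t(t-1)(2-t)>0$ and $Q(K_2,t)=t(t-1)>0$) unwinds to $(2-t)f_if_j>(t-1)^2g_ig_j$; the desired conclusion is $f_if_j\prod_{k\neq i,j}f_k>(t-1)^{m-1}g_ig_j\prod_{k\neq i,j}g_k$. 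Combining these is not mere bookkeeping: the hypotheses give no control on the individual signs of $f_i,f_j,g_i,g_j$, so one cannot simply multiply the two inequalities (multiplying inequalities whose sides may be negative is invalid), and a genuine further argument --- the substance of Lemma~2.5 of~\cite{DongKoh}, which the paper cites rather than reproduces --- is required. None of this appears in your proposal, whose first displayed identity is moreover left unresolved; as written, the proof does not close.
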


The proof of Lemma~\ref{lem:dongKoh} is a straightforward modification of the proof of Lemma~2.5 in~\cite{DongKoh}. We now prove the main theorem.

\begin{proof}[Proof of Theorem~\ref{thm:mainThm}]
Let $G$ be a smallest counterexample to the theorem and let $t \in (1, t_1]$ such that $Q(G,t)\leq 0$. We show that $G$ has property $\Delta$ and hence by Lemma~\ref{lem:charac}, $P(G,t) = P(G_{i,j,k},t)$ for some $i,j,k \in \mathbb{N}_0$. Since no $G_{i,j,k}$ has a root less than or equal to $t_1$, a contradiction ensues. 

By the hypotheses, $G$ is non-separable.

\textbf{Claim 1: $G$ is not $3$-connected.}

If $G$ is $3$-connected then $G-e$ and $G/e$ are non-separable for every edge $e \in E(G)$. So let $v$ be a leaf of $T$, and $e$ be an edge incident to $v$ but not in $T$. Then also $G-e$ and $G/e$ have a $3$-leaf spanning tree. By Proposition~\ref{prop:delcont} it follows that $G$ is not a smallest counterexample. This is a contradiction.

In what follows let $\{x,y\}$ be an arbitrary $2$-cut with $\{x,y\}$-bridges $B_1, \dots, B_m$.

\textbf{Claim 2: For every $2$-cut $\{x,y\}$, $xy \not\in E(G)$.}

Suppose $xy \in E(G)$. By Proposition~\ref{prop:factorcomplete}, $$Q(G,t) = \frac{Q(B_1,t)Q(B_2,t) \cdots Q(B_m,t)}{t^{m-1}(t-1)^{m-1}}.$$  Since $G$ is a smallest counterexample and $\mathcal{G}_3$ is splitting-closed, $Q(B_i,t) >0$ for $i \in [m]$. A contradiction follows.

\textbf{Claim 3: All $2$-cuts have precisely three bridges.}

Suppose $m$ is even, then 
\begin{align*}
Q(G,t) &= Q(G+xy,t) - Q(G/xy,t)\\
&= \frac{Q(B_1+xy,t) \cdots Q(B_m+xy,t)}{t^{m-1}(t-1)^{m-1}} + \frac{Q(B_1/xy,t) \cdots Q(B_m/xy,t)}{t^{m-1}}.
\end{align*}

Since $G$ is a smallest counterexample and $\mathcal{G}_3$ is splitting-closed, all terms in the final expression are positive and so $Q(G,t)>0$, a contradiction. Thus $m$ is odd. If $m\geq 5$, choose two bridges $B_i$ and $B_j$ for which $T[V(B_i)]$ and $T[V(B_j)]$ are disconnected and form the graphs $B_{i,j}$ and $B_{\cup}$ as described in Lemma~\ref{lem:dongKoh}. Clearly $B_{i,j}$ and $B_{\cup}$ are non-separable and have a $3$-leaf spanning tree. The same is true for all $B_{k}+xy$, $B_k /xy$, $k \in [m]\setminus \{i,j\}$ since $\mathcal{G}_3$ is splitting-closed. Since each of these graphs is smaller than $G$, the conditions of Lemma~\ref{lem:dongKoh} are satisfied and thus $Q(G,t)>0$, a contradiction.

\textbf{Claim 4: If $\{x,y\}$ is a $2$-cut, then every $\{x,y\}$-bridge is separable.}

Let $B$ be an arbitrary $\{x,y\}$-bridge, say $B = B_1$, and suppose for a contradiction that $B$ is non-separable. Since $xy\not\in E(G)$, $|V(B)|\geq 4$. We may assume that $B$ contains at most two leaves of $T$, since if $T$ has three leaves in $B$, then $G - \{x,y\}$ has at most two components. Relabelling $x$ and $y$ if necessary there are four cases how $T[V(B)]$ may behave:

\begin{enumerate}[{Case} 1:] 
\item $T[V(B)]$ is connected.
\item $T[V(B)]$ consists of an isolated vertex $x$ and a path starting at $y$ and covering all vertices of $B-x$.
\item $T[V(B)]$ consists of an isolated vertex $x$ and a tree with precisely three leaves, one of which is $y$, covering all vertices of $B-x$.
\item $T[V(B)]$ consists of two disjoint paths starting at $x$ and $y$ respectively and covering all vertices of $B$.
\end{enumerate}

In Case 1, $T[V(B)]$ is a $3$-leaf spanning tree of $B$. In Case 2 adding any edge of $B$ incident with $x$ to $T$ also shows that $B$ contains such a spanning tree. Thus in these two cases, $B, B+xy$, and $B/xy$ are members of $\mathcal{G}_3$. As $G$ is a smallest counterexample we may apply Lemma~\ref{lem:dongKoh} with $i=2, j=3$ which gives $Q(G,t)>0$, a contradiction.
\begin{figure}
\centering
\includegraphics[scale=0.7]{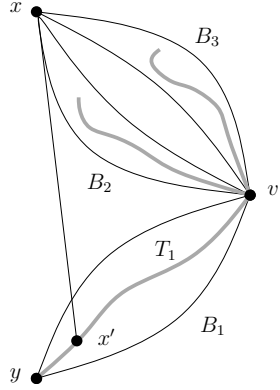}
\caption{Case 3a of Claim 4.}\label{fig:3a}
\end{figure}

Two cases remain.

\textbf{Case 3: $T[V(B)]$ consists of an isolated vertex $x$ and a tree with precisely three leaves, one of which is $y$, covering all vertices of $B-x$.}\\
Let $v$ be the vertex of degree $3$ in $T$ and $T_1$ be the path in $T$ from $y$ to $v$.

\textbf{Subcase 3a: $B - x$ is separable.}\\
Since $|V(B)|\geq 4$, there is a cut-vertex $z$ of $B-x$. So $\{x,z\}$ is a $2$-cut of $B$ and a $2$-cut of $G$. By Claim 3, $G $ has precisely three $\{x,z\}$-bridges, two of which are contained in $B$. This implies that $v = z$ and $T_1$ is a Hamiltonian path of the unique $\{z\}$-bridge of $B-x$ which contains $y$ (see Figure~\ref{fig:3a}). Since $B$ is non-separable, $V(T_1)\geq 3$ and $x$ has a neighbour in $V(T_1)\setminus \{y,z\}$. Choose such a neighbour, $x'$, from which the distance to $y$ on $T_1$ is minimal. It is easy to see that $G$ contains two paths from $x$ to $x'$ avoiding the edge $xx'$ itself. Thus $G - xx'$ is non-separable and has a $3$-leaf spanning tree. By Claim 2, $G/xx'$ is also non-separable. Now Proposition~\ref{prop:delcont} gives 

\begin{equation}\label{eqn:reduction}
Q(G,t) = Q(G - xx',t) + Q(G/xx',t).
\end{equation}

Since $G$ is a smallest counterexample, $Q(G-xx',t)>0$. Thus we have reached a contradiction if $Q(G/xx',t) >0$. This follows immediately if $G/xx'$ has a $3$-leaf spanning tree. Otherwise $|V(B/xx')| \geq 4$, and so we apply Lemma~\ref{lem:dongKoh} to $G/xx'$ with $i=2$ and $j=3$. To see that the hypotheses hold we show that all of the graphs $B/xx'$, $B/xx' + xy$ and $B/xx'/xy$ are non-separable and have a $3$-leaf spanning tree. That they contain such a spanning tree is clear. Also $B/xx' + xy$ is non-separable since $G/xx'$ is non-separable. Finally, if $B/xx'$ or $B/xx'/xy$ is separable, then $B-\{x,x'\}$ or $B-\{x,x',y\}$ is disconnected. By the choice of $x'$, this implies $\{x',y\}$ is a $2$-cut of $G$ with two $\{x',y\}$-bridges, a contradiction to Claim 3.

\textbf{Subcase 3b: $B-x$ is non-separable.}\\
Since $|V(B)|\geq 4$, $G-e$ is non-separable for every edge $e \in E(B)$ incident to $x$. Choose a neighbour, $x'$ of $x$, such that the distance on $T[V(B)]$ from $v$ to $x'$ is maximal. Since $B$ is non-separable, $x$ has at least two neighbours in $B$ and so $x'\neq v$. By Claim 2 and the above, both $G-xx'$ and $G/xx'$ are non-separable. Furthermore $G-xx'$ has a $3$-leaf spanning tree. As before, by~(\ref{eqn:reduction}), we have reached a contradiction if $Q(G/xx') >0$. This follows immediately if $G/xx'$ has a $3$-leaf spanning tree. Otherwise we apply Lemma~\ref{lem:dongKoh} to $G/xx'$ as in Case 3a. The same argument shows that the hypotheses hold.

\textbf{Case 4: Suppose that $T[V(B)]$ consists of two paths $P_1$ and $P_2$, starting at $x$ and $y$ respectively, and covering all vertices of $B$.}\\
Let $P_1 = x_1,\dots,x_{n_1}$ and $P_2 = y_1, \dots, y_{n_2}$ where $x = x_1$ and $y = y_1$. If $x$ or $x_{n_1}$ has a neighbour $x'$ on $P_2$, then $B$ contains a $3$-leaf spanning tree. As in Cases 1 and 2, this is enough to reach a contradiction. Thus $|V(P_1)| \geq 4$ and all neighbours of $x_{n_1}$ lie on $P_1$. Apart from its predecessor on $P_1$, $x_{n_1}$ has at least one other neighbour since $B$ is non-separable. If $x_{n_1}$ has at least two other neighbours, say $x_i$, $x_j$ with $i<j$, then $G-x_{n_1}x_i$ and $G/x_{n_1}x_i$ are non-separable and have a $3$-leaf spanning tree. By Proposition~\ref{prop:delcont} we again reach a contradiction. So we may suppose that $d(x_{n_1})=2$ and $N(x_{n_1}) = \{x_{n_1 - 1}, x_i\}$. It follows that $\{x_{i}, x_{n_1 - 1}\}$ is a $2$-cut of $G$. Thus $G$ has precisely three $\{x_{i}, x_{n_1 - 1}\}$-bridges, of which one contains the subpath $P_1[x_{i},x_{n_1 -1}]$. Call this bridge $B_4$. Note also that there is some edge $e$ from $x_{n_1 -1}$ to the $\{x_{i}, x_{n_1 - 1}\}$-bridge of $G$ containing $y$. Since $T[V(B_4)]$ is a Hamiltonian path of $B_4$, it follows from Case 1 that $B_4$ is separable. Thus it has a cut vertex $v$. Because of the edges $e$ and $x_{i}x_{n_1}$, we see that $G-\{x_i,v\}$ and $G-\{x_{n_1 -1},v\}$ both have at most two components. From Claim 3 it follows that neither of $\{x_i,v\}$ and $\{x_{n_1 -1},v\}$ are cut-sets of $G$. Thus $B_4$ is a path of length $2$, $i = n_1 - 3$, and $d(x_{n_1 -2}) = 2$. We conclude that there is at least one vertex of degree $2$ on the interior of $P_1$. 

\begin{figure}
\centering
\includegraphics[scale=1]{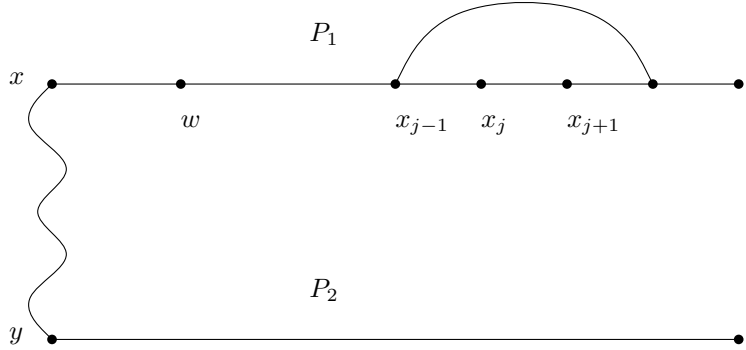}
\caption{If $w \in V(P_1)$.}\label{fig:cutvtxonP1}
\end{figure}

Now let $x_j \in V(P_1)\setminus \{x_1, x_{n_1}\}$ be a vertex of degree $2$ with $j$ as small as possible. Then $\{x_{j-1},x_{j+1}\}$ is a $2$-cut and $G$ contains precisely three $\{x_{j-1},x_{j+1}\}$-bridges. Since each of $x_{j-1}$ and $x_{j+1}$ has a neighbour in each $\{x_{j-1},x_{j+1}\}$-bridge, there is some edge $e$ from $x_{j-1}$ to one of $x_{j+2},x_{j+3}, \dots, x_{n_1}$. Now consider the $\{x_{j-1},x_{j+1}\}$-bridge, $B_y$, containing $y$. This bridge contains a $3$-leaf spanning tree covering all of its vertices apart from $x_{j+1}$. By Case 3, it is separable and has a cut-vertex $w$. It is easy to see that $w$ must lie on $P_1[x,x_{j-2}]$ or $P_2$.
Suppose the former (see Figure~\ref{fig:cutvtxonP1}). Since $|V(B_y)| \geq 4$ at least one of $\{x_{j-1}, w\}$ or $\{x_{j+1},w\}$ is a $2$-cut of $G$. However, because of the edge $e$ and the presence of a path from $x$ to $y$ in another $\{x,y\}$-bridge (indicated in Figure~\ref{fig:cutvtxonP1}), $G - \{x_{j+1},w\}$ has at most two components. Therefore $\{x_{j-1}, w\}$ is a $2$-cut of $G$ and as such gives rise to three $\{x_{j-1}, w\}$-bridges, one of which contains the path $P_1[w,x_{j-1}]$. This path has length at least $2$ by Claim~2. By the argument above, $w = x_{j-3}$ and $d(x_{j-2})=2$, contradicting the minimality of $j$.

\begin{figure}
\centering
\includegraphics[scale=1]{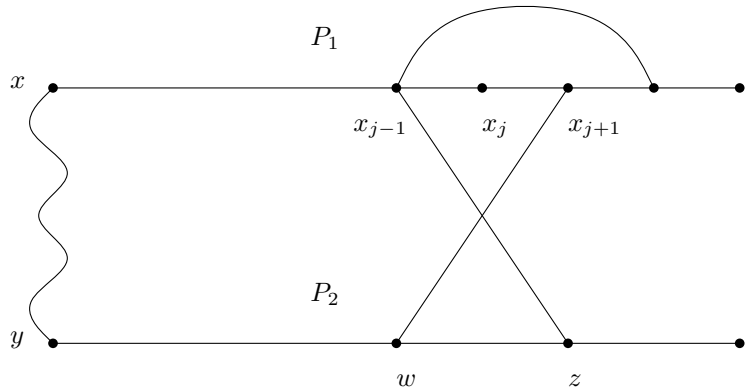}
\caption{If $w \in V(P_2)$.}\label{fig:cutvtxonP2}
\end{figure}
So $w$ lies on $P_2$ (see Figure~\ref{fig:cutvtxonP2}). Once again at least one of $\{x_{j-1}, w\}$ or $\{x_{j+1},w\}$ is a $2$-cut of $G$, but $\{x_{j+1},w\}$ cannot be since $G - \{x_{j+1}, w\}$ has at most two components. It follows that $wx_{j+1}$ is an edge and $\{x_{j-1}, w\}$ is a $2$-cut with precisely three bridges. Let $B_5$ be the $\{x_{j-1}, w\}$-bridge containing the vertex $y_{n_2}$, then $P_2[w,y_{n_2}]$ is a path of $B_5$ covering all vertices except $x_{j-1}$. By Case 2, $B_5$ is separable and has a cut vertex $z$ on $P_2[w,y_{n_2}]$. Because of the edge $wx_{j+1}$, $G-\{x_{j-1}, z\}$ has at most two components. Therefore $\{x_{j-1},z\}$ is not a $2$-cut and $zx_{j-1}$ is an edge. 

Finally note that $x_{j-1} \neq x$ and $w \neq y$ or else $B$ would contain a $3$-leaf spanning tree. Consider the $\{x_{j-1},w\}$-bridge $B_6$ containing $x$ and $y$. $T[V(B_6)]$ is connected so by Case 1, $B_6$ is separable and has a cut vertex $z'$. Since $|V(B_6)| \geq 4$, at least one of $\{x_{j-1},z'\}$ and $\{w,z'\}$ is a $2$-cut of $G$. However because of the edges $zx_{j-1}$ and $wx_{j+1}$, both $G - \{x_{j-1}, z'\}$ and $G - \{w, z'\}$ have at most two components. This gives the final contradiction.
\end{proof}
\end{section}

\begin{section}{Acknowledgements}
The author would like to thank Martin Merker and Carsten Thomassen for helpful discussions. This research was supported by ERC Advanced Grant GRACOL, project number 320812.
\end{section}

\bibliographystyle{plain}
 \bibliography{literature}

\end{document}